\definecolor{webgreen}{rgb}{0,.5,0}
\definecolor{webbrown}{rgb}{.6,0,0}
\tikzset{myfillcolor/.style = {fill=#1}}
\NewDocumentCommand{\fhighlight}{O{blue!40 } m m}{\draw[myfillcolor=#1] (#2.north west)rectangle (#3.south east);}
\newcommand{\seqnum}[1]{\href{http://oeis.org/#1}{\underline{#1}}}
\newcommand{\Na}{\mathbb N^\ast}
\newcommand{\No}{\mathbb N_0}
\begin{document}

\theoremstyle{plain}
\newtheorem{theorem}{Theorem}
\newtheorem{lemma}{Lemma}
\newtheorem{corollary}{Corollary}
\theoremstyle{definition}
\newtheorem{definition}{Definition}

\theoremstyle{remark}
\newtheorem{remark}{Remark}

\begin{center}
\vskip 1cm{\LARGE\bf
Links Between Sums Over Paths in Bernoulli's Triangles\\
\vskip .075in
and the Fibonacci Numbers
}
\vskip 1cm
{\large
Denis Neiter and Amsha Proag\\
Ecole Polytechnique\\
Route de Saclay\\
91128 Palaiseau\\
France\\
\href{mailto:denis.neiter@polytechnique.org}{\tt denis.neiter@polytechnique.org}\\
\href{mailto:amsha.proag@polytechnique.org}{\tt amsha.proag@polytechnique.org}
}
\end{center}

\vskip .2 in

\begin{abstract}
We investigate paths in Bernoulli's triangles and derive several relations linking the partial sums of binomial coefficients to the Fibonacci numbers.
\end{abstract}

\section{Introduction}

Binomial coefficients appear in many identities, some of which are closely connected to the Fibonacci sequence~\cite{azarian2012a,azarian2012b}. Pascal's triangle has been explored for links to the Fibonacci sequence as well as to generalized sequences~\cite{hoggattjr1970}. The partial sums of the binomial coefficients are less well known, although a number of identities have been found regarding sums of their powers~\cite{calkin1994,hirschhorn1996} and polynomials~\cite{he2014}. To add to the existing corpus, we review Bernoulli's second and third-order triangles for relations pertaining to sums of the binomial coefficients. We contribute several relations that link the Fibonacci numbers to binomial partial sums.

\section{Notation, definitions and preliminary lemma}

We let $\Na$ denote the positive natural numbers, i.e., $\{1,2,\ldots\}$ and we let $\No$ refer to $\Na\cup\{0\}$. For $p,q\in\mathbb Z,\;p\leqslant q$, we let $\llbracket p,q \rrbracket$ denote the integers between $p$ and $q$, i.e., $\llbracket p,q \rrbracket=\{k\in\mathbb Z\mid p\leqslant k\leqslant q\}$. For $p\in\mathbb Z$ we also let $p^+$ denote $\frac12\left(p+\vert p\vert\right)$.

We let $(F_n)_{n \in \No}$ denote the \textit{Fibonacci numbers} (\seqnum{A000045}~\cite{oeis}).

\begin{definition}Let $B_{n,k}$ denote the sum of the first $k$ binomial coefficients, i.e., $$\forall n,k\in\No,\;B_{n,k} = \displaystyle\sum_{q=0}^{k}\binom nq.$$
\label{def_Bernoulli}
\end{definition}
Definition~\ref{def_Bernoulli} implies that $\forall n\in\No,\forall k\geqslant 1,\;B_{n,k}=B_{n,k-1}+\displaystyle\binom nk$ and $\forall k\geqslant n$, $B_{n,k}=2^n$. We can therefore derive the following recurrence relation:
\begin{eqnarray}
  \forall n,k\geqslant1,\;B_{n,k}&=&\displaystyle\sum_{q=0}^k\binom nq\nonumber\\
  &=&\displaystyle\sum_{q=0}^k\binom {n-1}q+\displaystyle\sum_{q=0}^k\binom {n-1}{q-1}\nonumber\\
  &=&\displaystyle\sum_{q=0}^k\binom {n-1}q+\displaystyle\sum_{q=0}^{k-1}\binom {n-1}{q},\nonumber\\
B_{n,k}&=&B_{n-1,k}+B_{n-1,k-1}.
\label{relrecBnk}
\end{eqnarray}
We recognize the structure of Pascal's rule for the binomial coefficients with the boundary values $B_{n,0}=1$ and $B_{n,n}=\displaystyle\sum_{q=0}^n\binom nq=2^n$.

\begin{definition}
Let \textit{Bernoulli's triangle} denote the triangle formed by $\left(B_{n,k}\right)_{n\in\No,k\in\llbracket0,n\rrbracket}$. Bernoulli's triangle is illustrated by Figure~\ref{figtriangleB2}.
\end{definition}

\begin{figure}[!b]
\begin{center}
{\scalefont{0.9}
\begin{tikzpicture}
\matrix[matrix of math nodes,row sep=2pt,column sep = 2pt] (m) {
&k&0&1&2&3&4&5&6&7&8&9\\
n\\
0&&1\\
1&&1&2\\
2&&1&3&4\\
3&&1&4&7&8\\
4&&1&5&11&15&16\\
5&&1&6&16&26&31&32\\
6&&1&7&22&42&57&63&64\\
7&&1&8&29&64&99&120&127&128\\
8&&1&9&37&93&163&219&247&255&256\\
9&&1&10&46&130&256&382&466&502&511&512\\
};
\end{tikzpicture}
}
\captionof{figure}{Bernoulli's triangle (\seqnum{A008949}~\cite{oeis}): the element in row $n$ and column $k$ corresponds to $B_{n,k}$.}
\label{figtriangleB2}
\end{center}
\end{figure}

\begin{definition} For $n,k\in\No$, let $B^{[1]}_{n,k}=\displaystyle\binom nk$. Then for every $m \geqslant 1$ we define $$\forall n,k\in\No,\;B^{[m+1]}_{n,k} = \displaystyle\sum_{q=0}^{k}B^{[m]}_{n,q}.$$
For $m\geqslant 1$, let \textit{Bernoulli's $m$th-order triangle} $\mathcal B^{[m]}$ denote the triangle formed by the family $\left(B^{[m]}_{n,k}\right)_{n\in\No, k\in\llbracket0,n\rrbracket}$.
\label{defBm}
\end{definition}
The first and second-order triangles $\mathcal B^{[1]}$ and $\mathcal B^{[2]}$ correspond respectively to Pascal's triangle and Bernoulli's triangle. For $m\geqslant1$, the elements of $\mathcal B^{[m]}$ verify the following recurrence relation:
\begin{eqnarray}
\forall n\geqslant2,\forall k\in\llbracket1,n-1\rrbracket,\;B^{[m]}_{n,k}=B^{[m]}_{n-1,k}+B^{[m]}_{n-1,k-1}.
\label{relrecBm}
\end{eqnarray}
The proof of Relation~(\ref{relrecBm}) is the same as that of Relation~(\ref{relrecBnk}). The corresponding boundary values are $\forall n \in\No, \forall m\geqslant1,\;B^{[m]}_{n,0}=1$ and $\forall m\geqslant2,\;B^{[m]}_{n,n}=\displaystyle\sum_{q=0}^nB^{[m-1]}_{n,q}$.

\begin{definition}
In Bernoulli's $m$th-order triangle $\mathcal B^{[m]}$, for $c,l\in\mathbb Z,\;n_0\in\No,\;k_0\in\llbracket0,n_0\rrbracket$, let \textit{the path following direction $(c,l)$ from $(n_0,k_0)$} denote the sequence $\left(B^{[m]}_{n_0+kl,k_0-kc}\right)_{k\in\No}$ (where $l>0$ corresponds to increasing row numbers from $n_0$, and $c<0$ to increasing column numbers from $k_0$). For a given pair $(n_0,k_0)$, $\left(B^{[m]}_{n_0+kl,k_0-kc}\right)_{k\in\No}$ contains a finite number of elements when either $l\leqslant0$, or $l>0$ and $c\notin\llbracket-l,0\rrbracket$.
\end{definition}

In this work, we focus our attention on sums of elements along the two types of paths defined below.
\begin{definition}Let $m\in\Na$, $n\in\No$.
\begin{enumerate}[(i)]
\item Let $S^{[m]}_n\left(c,l\right)$ denote the sum over the path following direction $(c,l)$ ($c>0$, $l<0$) from $(n,n)$, i.e.,
$$S^{[m]}_n\left(c,l\right)=\displaystyle\sum_{k=0}^{\left\lfloor n/c\right\rfloor}B^{[m]}_{n+kl,n-kc}.$$
We shall also use $$\bar S^{[m]}_n\left(c,l\right)=2B^{[m]}_{n,n}-S^{[m]}_n\left(c,l\right)=B^{[m]}_{n,n}-\displaystyle\sum_{k=1}^{\left\lfloor n/c\right\rfloor}B^{[m]}_{n+kl,n-kc}.$$
\item Let $T^{[m]}_n\left(c,l\right)$ denote the sum over the path following direction $(c,l)$ ($c<0$, $l<0$) from $(n,0)$, i.e.,
$$T^{[m]}_n\left(c,l\right)=\displaystyle\sum_{k=0}^{\left\lfloor -n/(c+l)\right\rfloor}B^{[m]}_{n+kl,-kc}.$$
\end{enumerate}
\label{defpathsST}
\end{definition}

\begin{lemma}
Let $u\in\mathbb Z^{\No}$ and $\forall n\in\Na,\;v_n=u_n-2u_{n-1}$.

Then $\forall n\in\No,\;u_n=2^nu_0+\displaystyle\sum_{k=1}^n2^{n-k}v_k$.
\label{lemma}
\end{lemma}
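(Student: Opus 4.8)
The plan is to prove the identity by induction on $n$, working from the defining relation $v_n = u_n - 2u_{n-1}$ rewritten as $u_n = 2u_{n-1} + v_n$ for $n \geqslant 1$. First I would verify the base case $n = 0$: the sum $\sum_{k=1}^{0}$ is empty, so the right-hand side collapses to $2^0 u_0 = u_0$, which matches the left-hand side.

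For the inductive step, suppose the formula holds at some $n \in \No$. I would compute $u_{n+1}$ directly from the recurrence as $u_{n+1} = 2u_n + v_{n+1}$, then substitute the induction hypothesis for $u_n$ to obtain $u_{n+1} = 2\bigl(2^n u_0 + \sum_{k=1}^n 2^{n-k} v_k\bigr) + v_{n+1} = 2^{n+1} u_0 + \sum_{k=1}^n 2^{n+1-k} v_k + v_{n+1}$. The final manoeuvre is to absorb the trailing term $v_{n+1}$ into the sum by writing it as $2^{n+1-(n+1)} v_{n+1}$, which extends the upper summation index to $n+1$ and yields exactly the claimed expression at rank $n+1$, closing the induction.

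Alternatively, I could sidestep induction by dividing the recurrence through by $2^n$: setting $w_n = u_n / 2^n$ turns the relation into $w_n - w_{n-1} = v_n / 2^n$, which telescopes when summed from $1$ to $n$ to give $w_n - w_0 = \sum_{k=1}^n v_k / 2^k$; multiplying back by $2^n$ recovers the stated formula. This variant makes transparent why the factor $2^{n-k}$ appears, namely as the product of the constant multiplier $2$ accumulated over the $n-k$ steps separating rank $k$ from rank $n$.

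I do not expect a genuine obstacle here, as the statement is the routine closed-form solution of a first-order linear recurrence with constant coefficient $2$. The only points demanding care are the empty-sum convention in the base case and the correct reindexing when the isolated $v_{n+1}$ term is folded back into the summation; both are matters of bookkeeping rather than substantive difficulty.
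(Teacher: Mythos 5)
Your proof is correct, and your alternative telescoping argument is essentially the paper's own proof: the paper expands $\sum_{k=1}^n 2^{n-k}v_k = \sum_{k=1}^n 2^{n-k}u_k - \sum_{k=0}^{n-1}2^{n-k}u_k = u_n - 2^n u_0$ directly, which is exactly your normalized telescoping after multiplying back by $2^n$. Your primary induction argument is also valid and handles the base case and reindexing correctly; for a first-order linear recurrence like this the two routes are interchangeable.
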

\begin{proof}
From the definition of $(v_n)_{n\in\Na}$, we have
\begin{eqnarray*}
\displaystyle\sum_{k=1}^n2^{n-k}v_k&=&\sum_{k=1}^n2^{n-k}(u_k-2u_{k-1})\\
&=&\sum_{k=1}^n2^{n-k}u_k-\sum_{k=1}^n2^{n-k+1}u_{k-1}\\
&=&\sum_{k=1}^n2^{n-k}u_k-\sum_{k=0}^{n-1}2^{n-k}u_k\\
&=&u_n-2^nu_0.
\end{eqnarray*}
\end{proof}

Applying Lemma~\ref{lemma} to $u_n=F_{n+2}$ (i.e., $v_n=-F_{n-1}$) or $u_n=F_{2n+4}$ (i.e., $v_n=F_{2n+1}$) yields the following relations for $n\in\No$:\begin{eqnarray}
F_{n+2}=2^n-\sum_{k=1}^n2^{n-k}F_{k-1},\label{relsumfibo}\\
F_{2n+4}=3\cdot2^n+\sum_{k=1}^n2^{n-k}F_{2k+1}.\label{relsumfibo2}
\end{eqnarray}
Note that Benjamin and Quinn have reported Relation~(\ref{relsumfibo})~\cite[Id.\ 10]{benjaminquinn2003}.

\section{Bernoulli's second-order triangle}

\subsection{The path following direction \texorpdfstring{$(2,-1)$}{(2,-1)} from \texorpdfstring{$(n,n)$}{(n,n)}}

We first consider the path following direction $(2,-1)$ from $(n,n)$ in Bernoulli's triangle $\mathcal B^{[2]}$. As illustrated by Figure~\ref{figdiagonales}, the sequence $\left(\bar S^{[2]}_n\left( 2,-1\right)\right)_{n\in\No}$ can be related to the Fibonacci sequence by $\bar S^{[2]}_n\left( 2,-1\right)=F_{n+2}$. Given that $\bar S^{[2]}_n\left( 2,-1\right)=2B_{n,n}-S^{[2]}_n\left( 2,-1\right)$, we find that $S^{[2]}_n\left( 2,-1\right)=2^{n+1}-F_{n+2}$, which we formally express as Theorem~\ref{theoremS2} below.

\begin{figure}[!t]
\begin{center}
{\scalefont{0.9}
\begin{tikzpicture}[baseline=-\the\dimexpr\fontdimen22\textfont2\relax ]

\matrix[matrix of math nodes,row sep=5pt,column sep = 5pt,
 column 12/.style={anchor=base west},] (m) {
1&&&&&&&&&&\quad&1=1\\
1&2&&&&&&&&&\quad&2=2\\
1&3&4&&&&&&&&\quad&3=4-1\\
1&4&7&8&&&&&&&\quad&5=8-3\\
1&5&11&15&16&&&&&&\quad&8=16-7-1\\
1&6&16&26&31&32&&&&&\quad&13=32-15-4\\
1&7&22&42&57&63&64&&&&\quad&21=64-31-11-1\\
1&8&29&64&99&120&127&128&&&\quad&34=128-63-26-5\\
1&9&37&93&163&219&247&255&256&&\quad&55=256-127-57-16-1\\
1&10&46&130&256&382&466&502&511&512&\quad&89=512-255-120-42-6\\
};

\begin{pgfonlayer}{myback}
\foreach \element in {m-3-2,m-4-4,m-4-12}{
\fhighlight[blue!20]{\element}{\element}
}
\foreach \element in {m-3-1,m-4-3,m-5-5,m-5-12}{
\fhighlight[red!20]{\element}{\element}
}
\foreach \element in {m-4-2,m-5-4,m-6-6,m-6-12}{
\fhighlight[green!20]{\element}{\element}
}
\foreach \element in {m-4-1,m-5-3,m-6-5,m-7-7,m-7-12}{
\fhighlight[orange!20]{\element}{\element}
}
\end{pgfonlayer}

\end{tikzpicture}
}
\captionof{figure}{Four instances of the path following direction $(2,-1)$ from $(n,n)$ in Bernoulli's triangle are highlighted in distinct colors. Each term of $\bar S^{[2]}_n\left(2,-1\right)$ corresponds to a Fibonacci number.}
\label{figdiagonales}
\end{center}
\end{figure}

\begin{theorem}
\label{theoremS2}
\begin{equation}
\forall n\in\No,\;\displaystyle\sum_{k=0}^{\lfloor n/2 \rfloor}{\displaystyle\sum_{q=0}^{n-2k}{\binom{n-k}q}} = 2^{n+1} - F_{n+2}.
\end{equation}
\end{theorem}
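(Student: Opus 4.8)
The plan is to prove the equivalent statement $\bar S^{[2]}_n\left(2,-1\right)=F_{n+2}$, since the definition of $\bar S^{[2]}_n$ gives $S^{[2]}_n\left(2,-1\right)=2^{n+1}-\bar S^{[2]}_n\left(2,-1\right)$. Note first that the inner sum $\sum_{q=0}^{n-2k}\binom{n-k}q$ is exactly $B_{n-k,n-2k}$, so the left-hand side of the theorem is precisely $S^{[2]}_n\left(2,-1\right)=\sum_{k=0}^{\lfloor n/2\rfloor}B_{n-k,n-2k}$. Writing $S_n:=S^{[2]}_n\left(2,-1\right)$, the heart of the argument is to establish the second-order recurrence
\begin{equation*}
S_n = S_{n-1}+S_{n-2}+2^{n-1}\qquad(n\geqslant 2),
\end{equation*}
together with the base values $S_0=1$ and $S_1=2$.

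First I would isolate the boundary term $B_{n,n}=2^n$ and apply Pascal's rule~(\ref{relrecBnk}) to every remaining term, writing $B_{n-k,n-2k}=B_{n-k-1,n-2k}+B_{n-k-1,n-2k-1}$ for $1\leqslant k\leqslant\lfloor n/2\rfloor$ (the column index $n-2k$ does not exceed the row index $n-k-1$ precisely when $k\geqslant 1$, so each entry is genuine). The diagonal terms $B_{n-k-1,n-2k-1}=B_{(n-1)-k,(n-1)-2k}$ reassemble, after removing the $k=0$ term $B_{n-1,n-1}=2^{n-1}$, into $S_{n-1}$; the terms $B_{n-k-1,n-2k}=B_{(n-2)-(k-1),(n-2)-2(k-1)}$ reassemble, after the shift $k\mapsto k-1$, into $S_{n-2}$. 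Collecting yields $S_n=2^n+\left(S_{n-1}-2^{n-1}\right)+S_{n-2}$, which is the claimed recurrence.

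With the recurrence in hand, I would set $\bar S_n:=2^{n+1}-S_n$, so that $\bar S_0=1=F_2$ and $\bar S_1=2=F_3$, and substitute the recurrence to obtain $\bar S_n=\bar S_{n-1}+\bar S_{n-2}$; comparison with the Fibonacci recurrence and the two base values gives $\bar S_n=F_{n+2}$, hence $S_n=2^{n+1}-F_{n+2}$, as required. An equivalent finish, closer to the tools already assembled, is to observe that the recurrence forces $v_n:=S_n-2S_{n-1}$ to satisfy the Fibonacci recurrence with $v_1=0=F_0$ and $v_2=1=F_1$, so that $v_n=F_{n-1}$; Lemma~\ref{lemma} with $u_n=S_n$ and $u_0=1$ then gives $S_n=2^n+\sum_{k=1}^n2^{n-k}F_{k-1}$, and Relation~(\ref{relsumfibo}) collapses the sum to $2^{n+1}-F_{n+2}$.

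I expect the main obstacle to be the bookkeeping in the recurrence step, and in particular the dependence of the upper limit $\lfloor n/2\rfloor$ on the parity of $n$: when $n$ is even the reindexed diagonal sum acquires an extra term $B_{(n-1)-\lfloor n/2\rfloor,\,-1}$ that must be seen to vanish under the convention $B_{m,-1}=0$, whereas when $n$ is odd the limits already match. Handling this parity split cleanly, together with the boundary value $B_{m,m}=2^m$, is where the care lies; once the recurrence is correctly assembled the remainder is a routine induction.
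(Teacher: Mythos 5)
Your proposal is correct, and it takes a genuinely different route from the paper's. The paper works with the first-order difference $S^{[2]}_n(2,-1)-2S^{[2]}_{n-1}(2,-1)$: using the telescoping identity $B_{p,q}-2B_{p-1,q-1}=\binom{p-1}{q}$ it collapses that difference to a shallow-diagonal sum of Pascal's triangle, invokes the external identity $\sum_{k}\binom{n-k}{k}=F_{n+1}$ to identify it as $F_{n-1}$, and then needs Lemma~\ref{lemma} together with Relation~(\ref{relsumfibo}) to integrate back up to the closed form. You instead derive the second-order recurrence $S_n=S_{n-1}+S_{n-2}+2^{n-1}$ by a single application of Pascal's rule to each path element, after which $\bar S_n=2^{n+1}-S_n$ visibly satisfies the Fibonacci recurrence with $\bar S_0=F_2$, $\bar S_1=F_3$ --- I checked the reindexing and the parity issue you flag (the vanishing term $B_{m,-1}=0$ for even $n$), and the bookkeeping is sound. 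Your first finish is the more self-contained of the two arguments: it needs neither the Benjamin--Quinn identity nor Lemma~\ref{lemma} nor Relation~(\ref{relsumfibo}), and it makes the Fibonacci structure of $\bar S_n$ transparent rather than emergent from a summation identity. What the paper's heavier machinery buys is uniformity: the same first-order-difference template carries over essentially unchanged to the $(c,1-c)$ paths of Theorem~\ref{theoremlambda} (where the difference sequence $\lambda_n(c)$ obeys a $c$-step recurrence and no clean second-order recurrence for the sums themselves is available) and to the third-order triangle, whereas your two-term reassembly is specific to the direction $(2,-1)$. Your alternative finish via $v_n=S_n-2S_{n-1}=F_{n-1}$ reconnects with the paper's pipeline and is also correct.
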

\begin{proof}
We seek to prove that
$$\forall n\in\No,\; S^{[2]}_n\left( 2,-1\right) = 2^{n+1}-F_{n+2}.$$

We have $S^{[2]}_0\left( 2,-1\right)=1=2^{0+1}-F_{0+2}$, so the formula is valid for $n=0$.

Let $n\in\Na$. Then $S^{[2]}_n\left( 2,-1\right)-2S^{[2]}_{n-1}\left( 2,-1\right)=\displaystyle\sum_{k=0}^{\lfloor n/2 \rfloor}B_{n-k,n-2k}-2\!\!\sum_{k=0}^{\lfloor (n-1)/2 \rfloor}B_{n-1-k,n-1-2k}$.
Assume an even $n$, e.g., $n=2s$. Then
\begin{eqnarray*}
S^{[2]}_n\left( 2,-1\right)-2S^{[2]}_{n-1}\left( 2,-1\right)&=&\sum_{k=0}^sB_{2s-k,2s-2k}-2\sum_{k=0}^{s-1}B_{2s-1-k,2s-1-2k}\\
&=&B_{s,0}+\displaystyle\sum_{k=0}^{s-1}(B_{2s-k,2s-2k}-2B_{2s-1-k,2s-1-2k}).
\end{eqnarray*}

From Relation~(\ref{relrecBnk}) and Definition~\ref{def_Bernoulli}, we have
\begin{equation}
\forall p,q\geqslant1,\;B_{p,q}-2B_{p-1,q-1}=B_{p-1,q}-B_{p-1,q-1}=\binom{p-1}q.
\label{relB2diff}
\end{equation}
Therefore
\begin{eqnarray*}
S^{[2]}_n\left( 2,-1\right)-2S^{[2]}_{n-1}\left( 2,-1\right)&=&1+\displaystyle\sum_{k=0}^{s-1}\binom{2s-k-1}{2s-2k}\\
&=&1+\sum_{k=0}^{s-1}\binom{2s-k-1}{k-1}\\
&=&\sum_{k=0}^{s-1}\binom{2s-k-2}{k}.
\end{eqnarray*}
Given that $\forall n\in\No,\;\displaystyle\sum_{k=0}^{\left\lfloor n/2\right\rfloor}\binom{n-k}{k}=F_{n+1}$~\cite[Id.\ 4]{benjaminquinn2003}, we have $$S^{[2]}_n\left( 2,-1\right)-2S^{[2]}_{n-1}\left( 2,-1\right)=F_{(2s-2)+1}=F_{n-1}.$$

The above relation, illustrated by Figure~\ref{figdiagonalesS2}, can be proven for odd values of $n$ using the same method. Applying Lemma~\ref{lemma} to $u_n=S^{[2]}_n\left( 2,-1\right)$ and using Relation~(\ref{relsumfibo}), we obtain $$\forall n\in\No,\;S^{[2]}_n\left( 2,-1\right)=2^n+\displaystyle\sum_{k=1}^{n}2^{n-k}F_{k-1}=2^{n+1}-F_{n+2}.$$
\end{proof}

\begin{figure}[!b]
\begin{center}
{\scalefont{0.9}
\begin{tikzpicture}[baseline=-\the\dimexpr\fontdimen22\textfont2\relax ]
\matrix[matrix of math nodes,row sep=5pt,column sep = 5pt,
 column 11/.style={anchor=base west}, column 12/.style={anchor=base west},] (m) {
1&&&&&&&&&&1=1&\\
1&2&&&&&&&&&2=2&=2\cdot1+F_0\\
1&3&4&&&&&&&&5=4+1&=2\cdot2+F_1\\
1&4&7&8&&&&&&&11=8+3&=2\cdot5+F_2\\
1&5&11&15&16&&&&&&24=16+7+1&=2\cdot11+F_3\\
1&6&16&26&31&32&&&&&51=32+15+4&=2\cdot24+F_4\\
1&7&22&42&57&63&64&&&&107=64+31+11+1&=2\cdot51+F_5\\
1&8&29&64&99&120&127&128&&&222=128+63+26+5&=2\cdot107+F_6\\
};

\begin{pgfonlayer}{myback}
\foreach \element in {m-3-2,m-4-4,m-4-11}{
\fhighlight[blue!20]{\element}{\element}
}
\foreach \element in {m-3-1,m-4-3,m-5-5,m-5-11}{
\fhighlight[red!20]{\element}{\element}
}
\foreach \element in {m-4-2,m-5-4,m-6-6,m-6-11}{
\fhighlight[green!20]{\element}{\element}
}
\foreach \element in {m-4-1,m-5-3,m-6-5,m-7-7,m-7-11}{
\fhighlight[orange!20]{\element}{\element}
}
\end{pgfonlayer}
\end{tikzpicture}
}
\captionof{figure}{Fibonacci numbers resulting from $S^{[2]}_n\left( 2,-1\right)-2S^{[2]}_{n-1}\left( 2,-1\right)$, forming \seqnum{A027934}~\cite{oeis}.}
\label{figdiagonalesS2}
\end{center}
\end{figure}

\subsection{The path following direction \texorpdfstring{$(c,1-c)$}{(c,1-c)} from \texorpdfstring{$(n,n)$}{(n,n)}}
\begin{definition}
For $c \geqslant 2,\;n\geqslant1$, let $\lambda_n(c)=S^{[2]}_n\left(c,1-c\right)-2S^{[2]}_{n-1}\left(c,1-c\right)$.
\end{definition}
The sequence $\left(\lambda_n(c)\right)_{n\in\Na}$ corresponds to \seqnum{A000930} for $c = 3$ (see Figure~\ref{figdiagonaleslambda3}), \seqnum{A003269} for $c = 4$ and \seqnum{A003520} for $c = 5$~\cite{oeis}. We observe that $\left(\lambda_n(c)\right)_{n\in\Na}$ satisfies the following linear recurrence relation.
\begin{theorem}
$$\forall n\in\llbracket1,c-1\rrbracket,\;\lambda_n(c)=0;\;\lambda_{c}(c)=1\text{ and }\forall n>c,\; \lambda_n(c)=\lambda_{n-1}(c)+\lambda_{n-c}(c).$$
\label{theoremlambda}
\end{theorem}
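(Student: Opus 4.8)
The plan is to mirror the proof of Theorem~\ref{theoremS2}: first collapse the defining difference $\lambda_n(c)=S^{[2]}_n(c,1-c)-2S^{[2]}_{n-1}(c,1-c)$ into a single sum of binomial coefficients, and then read off both the base cases and the three-term recurrence directly from that closed form.

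First I would write out the two paths explicitly. Along direction $(c,1-c)$ from $(n,n)$, the $k$-th entry is $B_{n-k(c-1),\,n-kc}$, while the $k$-th entry of the path from $(n-1,n-1)$ is exactly $B_{n-k(c-1)-1,\,n-kc-1}$, i.e., the same pair of indices each reduced by one. This is the decisive structural observation: it lets me pair the two sums term by term and invoke Relation~(\ref{relB2diff}), namely $B_{p,q}-2B_{p-1,q-1}=\binom{p-1}{q}$. The $k=0$ terms contribute $2^n-2\cdot 2^{n-1}=0$, and for $k\geqslant1$ the paired difference is $\binom{n-k(c-1)-1}{n-kc}=\binom{n-k(c-1)-1}{k-1}$, the complementary column of $\binom{n-k(c-1)-1}{n-kc}$ being $k-1$. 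After the upper-limit bookkeeping this should yield the closed form
$$\lambda_n(c)=\sum_{k=1}^{\lfloor n/c\rfloor}\binom{n-k(c-1)-1}{k-1}=\sum_{j=0}^{\lfloor n/c\rfloor-1}\binom{n-c-j(c-1)}{j}.$$

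With this formula in hand the base cases are immediate: for $n\in\llbracket1,c-1\rrbracket$ the sum is empty, so $\lambda_n(c)=0$, and for $n=c$ only the $j=0$ term survives, giving $\binom{0}{0}=1$. For the recurrence with $n>c$, I would apply Pascal's rule $\binom{m}{j}=\binom{m-1}{j}+\binom{m-1}{j-1}$ to the top index of each term. The first resulting sum is $\lambda_{n-1}(c)$, and reindexing the second by $i=j-1$ turns the top index $n-c-(i+1)(c-1)$ into $n-2c-i(c-1)$, which is precisely the closed form for $\lambda_{n-c}(c)$; this gives $\lambda_n(c)=\lambda_{n-1}(c)+\lambda_{n-c}(c)$.

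The main obstacle is the floor-function bookkeeping, which I expect to require splitting into the cases $c\nmid n$ and $c\mid n$. When $c\nmid n$ the two paths have equal length and the pairing is clean. When $c\mid n$ the path from $(n,n)$ carries one extra term, namely $B_{n-N(c-1),0}=1$ with $N=n/c$, and I would check that this $1$ is exactly the $k=N$ term $\binom{N-1}{N-1}$ of the closed form, so that the single formula holds uniformly. The same care is needed in the recurrence step: after the Pascal split the first sum runs to $\lfloor n/c\rfloor-1$ rather than to $\lfloor(n-1)/c\rfloor-1$, and when $c\mid n$ the surplus term evaluates to $\binom{n/c-2}{n/c-1}=0$, so no spurious contribution survives and the identification with $\lambda_{n-1}(c)$ remains valid.
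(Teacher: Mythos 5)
Your proposal is correct, and it rests on the same two ingredients as the paper's proof — the identity $B_{p,q}-2B_{p-1,q-1}=\binom{p-1}{q}$ of Relation~(\ref{relB2diff}) applied to the term-by-term pairing of the two paths, followed by Pascal's rule on the resulting binomial coefficients — but you order them differently, and the reordering is a genuine improvement in organization. The paper keeps $\lambda_n(c)$, $\lambda_{n-1}(c)$ and $\lambda_{n-c}(c)$ as differences of $B$-sums, converts each one separately, and must therefore track five summation upper bounds simultaneously, which forces the three-way case split on $r=n-c\lfloor n/c\rfloor$ of Table~\ref{tblremainder} (only the $r=0$ case is written out). You instead establish the single closed form $\lambda_n(c)=\sum_{j=0}^{\lfloor n/c\rfloor-1}\binom{n-c-j(c-1)}{j}$ once and for all, after which the base cases are immediate and the recurrence is one application of Pascal's rule plus the observation that the surplus boundary term $\binom{n/c-2}{n/c-1}$ vanishes when $c\mid n$; the case analysis collapses to the single dichotomy $c\mid n$ versus $c\nmid n$, confined to the derivation of the closed form. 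Your boundary checks are right: the extra term $B_{n-N(c-1),0}=1$ when $c\mid n$ does equal $\binom{N-1}{N-1}$, and the reindexed second sum has exactly the upper limit $\lfloor(n-c)/c\rfloor-1$ required to match $\lambda_{n-c}(c)$. As a bonus, your route actually proves the explicit expression for $\lambda_n(c)$, which the paper only asserts after the theorem (the two formulas differ only in trailing zero binomials, since the paper's upper limit $\lfloor(n-c)/(c-1)\rfloor$ exceeds yours but the extra summands vanish).
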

The sequence $\left(\lambda_n(c)\right)_{n\in\Na}$ is a generalization the Fibonacci sequence, which corresponds to the $c=2$ case (as shown above, $\forall n\in\Na, \lambda_n(2) = F_{n-1}$).

\begin{figure}[!b]
\begin{center}
{\scalefont{0.9}
\begin{tikzpicture}[baseline=-\the\dimexpr\fontdimen22\textfont2\relax ]

\matrix[matrix of math nodes,row sep=5pt,column sep = 5pt,
 column 11/.style={anchor=base west}, column 12/.style={anchor=base west},] (m) {
1&&&&&&&&&&1=1&\\
1&2&&&&&&&&&2=2&=2\cdot1+0\\
1&3&4&&&&&&&&4=4&=2\cdot2+0\\
1&4&7&8&&&&&&&9=8+1&=2\cdot4+1\\
1&5&11&15&16&&&&&&19=16+3&=2\cdot9+1\\
1&6&16&26&31&32&&&&&39=32+7&=2\cdot19+1\\
1&7&22&42&57&63&64&&&&80=64+15+1&=2\cdot39+2\\
1&8&29&64&99&120&127&128&&&163=128+31+4&=2\cdot80+3\\
};

\begin{pgfonlayer}{myback}
\foreach \element in {m-2-1,m-4-4,m-4-11}{
\fhighlight[blue!20]{\element}{\element}
}
\foreach \element in {m-3-2,m-5-5,m-5-11}{
\fhighlight[red!20]{\element}{\element}
}
\foreach \element in {m-4-3,m-6-6,m-6-11}{
\fhighlight[green!20]{\element}{\element}
}
\foreach \element in {m-3-1,m-5-4,m-7-7,m-7-11}{
\fhighlight[orange!20]{\element}{\element}
}
\end{pgfonlayer}

\end{tikzpicture}
}
\captionof{figure}{Instances of $S^{[2]}_n\left(3,-2\right)$ in $\mathcal B^{[2]}$ form \seqnum{A099568}~\cite{oeis} and the difference $S^{[2]}_n\left(3,-2\right)-2S^{[2]}_{n-1}\left(3,-2\right)$ yields the terms of $(\lambda_n(3))_{n\in\Na}$, corresponding to \seqnum{A000930}~\cite{oeis}.}
\label{figdiagonaleslambda3}
\end{center}
\end{figure}

\begin{proof}
Let $c\geqslant2$, $n\geqslant1$. From Definition~\ref{defpathsST}, we have
$$\lambda_n(c)=\sum_{k=0}^{\left\lfloor n/c\right\rfloor}B_{n+k(1-c),n-kc}-2\sum_{k=0}^{\left\lfloor (n-1)/c\right\rfloor}B_{n-1+k(1-c),n-1-kc}.$$

We first ascertain the initial values of $(\lambda_n(c))_{n\in\Na}$.
For $n<c$, $\lfloor n/c\rfloor=\lfloor (n-1)/c\rfloor=0$ and $\lambda_n(c)=B_{n,n}-2B_{n-1,n-1}=2^n-2\cdot 2^{n-1}=0$.
For $n=c$, $\lambda_c(c)=B_{c,c}+B_{1,0}-2B_{c-1,c-1}=2^c+1-2\cdot 2^{c-1}=1$.

We then prove the recurrence relation for $n>c$. We have
$$\lambda_{n-1}(c)=\sum_{k=0}^{\left\lfloor (n-1)/c\right\rfloor}B_{n-1+k(1-c),n-1-kc}-2\sum_{k=0}^{\left\lfloor (n-2)/c\right\rfloor}B_{n-2+k(1-c),n-2-kc}$$
and
$$\lambda_{n-c}(c)=\sum_{k=0}^{\left\lfloor (n-c)/c\right\rfloor}B_{n-c+k(1-c),n-c-kc}-2\sum_{k=0}^{\left\lfloor (n-c-1)/c\right\rfloor}B_{n-c-1+k(1-c),n-c-1-kc}.$$
The summation upper bounds of $\lambda_{n}(c)$, $\lambda_{n-1}(c)$ and $\lambda_{n-c}(c)$ depend on the remainder of the Euclidean division of $n$ by $c$. Let $n=cq+r>c$ with $q=\left\lfloor n/c\right\rfloor\in\Na, r\in\llbracket0,c-1\rrbracket$. There are three cases to consider, which are shown in Table~\ref{tblremainder}.
\begin{table}[!h]
\begin{center}
$\begin{array}{c|ccccc}
&\left\lfloor n/c\right\rfloor&\left\lfloor (n-1)/c\right\rfloor&\left\lfloor (n-2)/c\right\rfloor&\left\lfloor (n-c)/c\right\rfloor&\left\lfloor (n-c-1)/c\right\rfloor\\\hline
r=0&q&q-1&q-1&q-1&q-2\\
r=1&q&q&q-1&q-1&q-1\\
1<r<c&q&q&q&q-1&q-1
\end{array}$
\end{center}
\caption{Summation upper bounds for the various values of $r=n-c\left\lfloor n/c\right\rfloor$.}
\label{tblremainder}
\end{table}

For $r=0$, we have
\begin{eqnarray*}
\lambda_n(c)&=&\sum_{k=0}^{q}B_{cq+k(1-c),cq-kc}-2\sum_{k=0}^{q-1}B_{cq-1+k(1-c),cq-1-kc}\\
&=&B_{cq+q(1-c),cq-qc}+\sum_{k=0}^{q-1}\left(B_{cq+k(1-c),cq-kc}-2B_{cq-1+k(1-c),cq-1-kc}\right),
\end{eqnarray*}
and Relation~(\ref{relB2diff}) yields
\begin{eqnarray*}
\lambda_n(c)&=&B_{q,0}+\sum_{k=0}^{q-1}\binom{cq-1+k(1-c)}{cq-kc}\\
&=&1+\sum_{k=0}^{q-1}\binom{cq-1+k(1-c)}{k-1}\\
&=&1+\sum_{k=1}^{q-1}\binom{cq-1+k(1-c)}{k-1}.
\end{eqnarray*}
Similarly,
\begin{eqnarray*}
\lambda_{n-1}(c)&=&\sum_{k=0}^{q-1}B_{cq-1+k(1-c),cq-1-kc}-2\sum_{k=0}^{q-1}B_{cq-2+k(1-c),cq-2-kc}\\
&=&\sum_{k=0}^{q-1}\binom{cq-2+k(1-c)}{k-1},
\end{eqnarray*}
and
\begin{eqnarray*}
\lambda_{n-c}(c)&=&\sum_{k=0}^{q-1}B_{cq-c+k(1-c),cq-c-kc}-2\sum_{k=0}^{q-2}B_{cq-c-1+k(1-c),cq-c-1-kc}\\
&=&\underbrace{B_{cq-c+(q-1)(1-c),cq-c-(q-1)c}}_{B_{q-1,0}}+\sum_{k=0}^{q-2}(\underbrace{B_{cq-c+k(1-c),cq-c-kc}-2B_{cq-c-1+k(1-c),cq-c-1-kc}}_{\binom{cq-c-1+k(1-c)}{cq-c-kc}})\\
&=&1+\sum_{k=0}^{q-2}\binom{cq-c-1+k(1-c)}{k-1}\\
&=&1+\sum_{k=1}^{q-1}\binom{cq-2+k(1-c)}{k-2}.
\end{eqnarray*}
Therefore we have
\begin{eqnarray*}
\lambda_{n-1}(c)+\lambda_{n-c}(c)&=&0+\sum_{k=1}^{q-1}\binom{cq-2+k(1-c)}{k-1}+\sum_{k=1}^{q-1}\binom{cq-2+k(1-c)}{k-2}+1\\
&=&\sum_{k=1}^{q-1}\binom{cq-1+k(1-c)}{k-1}+1\\
&=&\lambda_n(c).
\end{eqnarray*}

This proves Theorem~\ref{theoremlambda} for the $r=0$ case. We can use the same method for $r=1$ or $r>1$, since only the summation upper bounds are modified in those cases.
\end{proof}

We can write the explicit expression of $\lambda_n(c)$ as $\displaystyle\lambda_n(c)=\sum_{i=0}^{\left\lfloor (n-c)/(c-1)\right\rfloor}\binom{n-c+i(1-c)}{i}.$ We obtain, using Lemma~\ref{lemma}, $S^{[2]}_n(c,1-c)=2^n+\displaystyle\sum_{k=1}^n2^{n-k}\lambda_k(c)$, which leads to the following relation.
\begin{corollary}
$$\displaystyle\sum_{k=0}^{\lfloor n/c \rfloor}{\displaystyle\sum_{q=0}^{n-kc}{\binom{n-(c-1)k}q}}=2^n+\displaystyle\sum_{k=1}^n2^{n-k}\displaystyle\sum_{i=0}^{\left\lfloor (k-c)/(c-1)\right\rfloor}\binom{k-c-(c-1)i}{i}.$$
\end{corollary}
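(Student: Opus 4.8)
The plan is to recognize that the two sides of the identity are, respectively, the path sum $S^{[2]}_n(c,1-c)$ written out explicitly and the value of that same sum as produced by Lemma~\ref{lemma}; the only genuinely new ingredient is the closed form for $\lambda_n(c)$. First I would unpack the left-hand side: since $B_{p,q}=\sum_{q'=0}^{q}\binom{p}{q'}$ by Definition~\ref{def_Bernoulli}, and the path from $(n,n)$ in direction $(c,1-c)$ visits the entries $B_{n-(c-1)k,\,n-ck}$ for $k=0,\dots,\lfloor n/c\rfloor$, the double sum on the left is exactly $S^{[2]}_n(c,1-c)$ as given in Definition~\ref{defpathsST}. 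This reduces the corollary to proving $S^{[2]}_n(c,1-c)=2^n+\sum_{k=1}^{n}2^{n-k}\lambda_k(c)$ together with the stated closed form for $\lambda_k(c)$.

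For the first equality I would apply Lemma~\ref{lemma} to $u_n=S^{[2]}_n(c,1-c)$. By definition $v_n=u_n-2u_{n-1}=\lambda_n(c)$, and $u_0=B_{0,0}=1$, so the lemma yields $u_n=2^n+\sum_{k=1}^{n}2^{n-k}\lambda_k(c)$ at once. Hence everything hinges on the closed form
$$\lambda_n(c)=\sum_{i=0}^{\lfloor(n-c)/(c-1)\rfloor}\binom{n-c-(c-1)i}{i}.$$

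To establish this I would set $g_n$ equal to the right-hand side, with the convention that binomial coefficients with out-of-range lower index vanish and an empty sum is $0$, and verify that $g_n$ obeys the recurrence of Theorem~\ref{theoremlambda}. The initial values are a direct check: for $1\leqslant n\leqslant c-1$ the upper limit $\lfloor(n-c)/(c-1)\rfloor=-1$ makes the sum empty, so $g_n=0$, while for $n=c$ only the $i=0$ term $\binom{0}{0}=1$ survives, so $g_c=1$. For $n>c$ the engine is Pascal's rule applied to each summand, $\binom{n-c-(c-1)i}{i}=\binom{n-1-c-(c-1)i}{i}+\binom{n-1-c-(c-1)i}{i-1}$; summing the first terms over $i$ reproduces $g_{n-1}$, and after the reindexing $j=i-1$ the second terms become $\sum_{j}\binom{n-2c-(c-1)j}{j}=g_{n-c}$, giving $g_n=g_{n-1}+g_{n-c}$. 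Since $g_n$ and $\lambda_n(c)$ satisfy the same recurrence and the same initial data, they coincide. Substituting the closed form into $u_n=2^n+\sum_{k=1}^{n}2^{n-k}\lambda_k(c)$ then delivers the right-hand side of the corollary.

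The main obstacle is the boundary bookkeeping in the Pascal step: I must confirm that the reindexing $j=i-1$ and the (implicit) summation limits line up so that no stray term is created or lost at $i=0$ or at the top of the range, which is precisely where the conventions $\binom{a}{-1}=0$ and the empty-sum convention have to be invoked consistently with the floor bounds. Everything else is routine substitution into the expression obtained from Lemma~\ref{lemma}.
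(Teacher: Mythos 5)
Your proposal is correct and follows essentially the same route as the paper: identify the left-hand side as $S^{[2]}_n(c,1-c)$, apply Lemma~\ref{lemma} with $v_n=\lambda_n(c)$ and $u_0=1$, and substitute the closed form $\lambda_n(c)=\sum_{i=0}^{\lfloor (n-c)/(c-1)\rfloor}\binom{n-c-(c-1)i}{i}$. The only difference is that the paper merely asserts this closed form as a consequence of Theorem~\ref{theoremlambda}, whereas you verify it by checking the initial values and the recurrence via Pascal's rule (with the convention $\binom{a}{-1}=0$ and vanishing out-of-range terms), which fills in a detail the paper leaves implicit.
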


\subsection{The path following direction \texorpdfstring{$(-1,-1)$}{(-1,-1)} from \texorpdfstring{$(n,0)$}{(n,0)}}
\label{secT2}
The path following direction $(-1,-1)$ from $(n,0)$ has a connection to the Fibonacci sequence that appears in the difference between successive terms of $T^{[2]}_n(-1,-1)$, as illustrated by Figure~\ref{figdiagonalesT2}. Not only do we notice that
$$\forall p\geqslant1,\;T^{[2]}_{2p}(-1,-1)=T^{[2]}_{2p-1}(-1,-1)+F_{2p+1},$$
but also that
$$\forall p\geqslant1,\;T^{[2]}_{2p+1}(-1,-1)=T_{2p}^{[2]}(-1,-1)+T^{[2]}_{2p-1}(-1,-1).$$
\begin{proof}
To prove these two recurrence relations, we rely on Relation~(\ref{relrecBnk}). For odd indices, we have
\begin{eqnarray*}
\forall p\geqslant1,\;
T^{[2]}_{2p+1}(-1,-1)&=&\displaystyle\sum_{k=0}^{p}B_{2p+1-k,k}\\
&=&B_{2p+1,0}+\displaystyle\sum_{k=1}^{p}\left(B_{2p-k,k-1}+B_{2p-k,k}\right)\\
&=&B_{2p,0}+\displaystyle\sum_{k=1}^{p}B_{2p-k,k}+\displaystyle\sum_{k=0}^{p-1}B_{2p-1-k,k}\\
&=&\displaystyle\sum_{k=0}^{p}B_{2p-k,k}+\displaystyle\sum_{k=0}^{p-1}B_{2p-1-k,k}\\
&=&T^{[2]}_{2p}(-1,-1)+T^{[2]}_{2p-1}(-1,-1).
\end{eqnarray*}
For even indices,
\begin{eqnarray*}
\forall p\geqslant1,\;T^{[2]}_{2p}(-1,-1)&=&\displaystyle\sum_{k=0}^{p}B_{2p-k,k}\\
&=&\displaystyle\sum_{k=0}^{p}\displaystyle\sum_{q=0}^{k}\binom{2p-k}{q}\\
&=&\displaystyle\sum_{k=0}^{p}\left(\displaystyle\sum_{q=0}^{k-1}\binom{2p-k}{q}+\binom{2p-k}{k}\right)\\
&=&\displaystyle\sum_{k=0}^{p-1}\displaystyle\sum_{q=0}^{k}\binom{2p-1-k}{q}+\displaystyle\sum_{k=0}^{p}\binom{2p-k}{k}.
\end{eqnarray*}
Since $\forall n\in\No,\;\displaystyle\sum_{k=0}^{\left\lfloor n/2\right\rfloor}\binom{n-k}{k}=F_{n+1}$~\cite[Id.\ 4]{benjaminquinn2003}, we have $\displaystyle\sum_{k=0}^{p}\binom{2p-k}{k}=F_{2p+1}$. Hence,
$$\forall p\geqslant1,\;T^{[2]}_{2p}(-1,-1)=T^{[2]}_{2p-1}(-1,-1)+F_{2p+1}.$$
\end{proof}

These two relations make it possible to derive a recurrence relation that pertains only to the odd subsequence:
$$\forall p\geqslant1,\;T^{[2]}_{2p+1}(-1,-1)-2T^{[2]}_{2p-1}(-1,-1)=F_{2p+1}.$$
Therefore, using Lemma~\ref{lemma} and Relation~(\ref{relsumfibo2}), we have
\begin{eqnarray*}
\forall p\geqslant1,\;T^{[2]}_{2p+1}(-1,-1)&=&2^p\underbrace{T^{[2]}_{1}(-1,-1)}_1+\displaystyle\sum_{k=1}^p2^{p-k}F_{2k+1}\\
&=&2^p+F_{2p+4}-3\cdot 2^p\\
&=&F_{2p+4}-2^{p+1}.
\end{eqnarray*}
We feed back the odd subsequence into the expression for the even subsequence as follows:
\begin{eqnarray*}
\forall p\geqslant1,\;T^{[2]}_{2p}(-1,-1)&=&T^{[2]}_{2p-1}(-1,-1)+F_{2p+1}\\
&=&F_{2p+2}-2^{p}+F_{2p+1}\\
&=&F_{2p+3}-2^p.
\end{eqnarray*}

We have thus proven that $\forall n\in\No,\;T^{[2]}_{n}(-1,-1)=F_{n+3}-2^{\left\lfloor(n+1)/2\right\rfloor}$, which we can rewrite as the following theorem.
\begin{theorem}
\label{resT2}
$$\forall n\in\No,\;\displaystyle\sum_{k=0}^{\left\lfloor n/2\right\rfloor}\sum_{q=0}^k\binom{n-k}q=F_{n+3}-2^{\left\lfloor(n+1)/2\right\rfloor}.$$
\end{theorem}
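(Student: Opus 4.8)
The plan is to first recognize that the double sum on the left-hand side is precisely the path sum $T^{[2]}_n(-1,-1)$ of Definition~\ref{defpathsST}: since the inner sum $\sum_{q=0}^k\binom{n-k}q$ equals $B_{n-k,k}$ by Definition~\ref{def_Bernoulli}, the left-hand side is $\sum_{k=0}^{\lfloor n/2\rfloor}B_{n-k,k}=T^{[2]}_n(-1,-1)$. Thus the theorem reduces to establishing the closed form $T^{[2]}_n(-1,-1)=F_{n+3}-2^{\lfloor(n+1)/2\rfloor}$, and the whole argument can be carried out at the level of the sequence $\bigl(T^{[2]}_n(-1,-1)\bigr)_{n\in\No}$ rather than the binomial sums directly.

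The central difficulty is that, unlike the diagonal paths of the earlier sections, this sequence does not satisfy a single clean three-term recurrence; instead it obeys a parity-dependent pair of relations. My plan is therefore to extract these two relations from Pascal's rule~(\ref{relrecBnk}) by splitting each term via $B_{p,q}=B_{p-1,q}+B_{p-1,q-1}$ and carefully re-indexing the resulting sums. I expect this to yield the coupled system
$$T^{[2]}_{2p}(-1,-1)=T^{[2]}_{2p-1}(-1,-1)+F_{2p+1},\qquad T^{[2]}_{2p+1}(-1,-1)=T^{[2]}_{2p}(-1,-1)+T^{[2]}_{2p-1}(-1,-1),$$
where the Fibonacci term $F_{2p+1}$ emerges from the identity $\sum_{k=0}^{p}\binom{2p-k}k=F_{2p+1}$. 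Decoupling this system is the main obstacle: substituting the first relation into the second to eliminate the even-indexed terms gives the pure first-order recurrence $T^{[2]}_{2p+1}(-1,-1)-2T^{[2]}_{2p-1}(-1,-1)=F_{2p+1}$ on the odd subsequence, which is exactly the shape handled by Lemma~\ref{lemma}.

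Next I would apply Lemma~\ref{lemma} to $u_p=T^{[2]}_{2p+1}(-1,-1)$ with forcing term $v_p=F_{2p+1}$ and base value $u_0=T^{[2]}_1(-1,-1)=1$. This produces $T^{[2]}_{2p+1}(-1,-1)=2^p+\sum_{k=1}^p2^{p-k}F_{2k+1}$, and Relation~(\ref{relsumfibo2}) collapses the geometric–Fibonacci sum into $F_{2p+4}-3\cdot2^p$, leaving $T^{[2]}_{2p+1}(-1,-1)=F_{2p+4}-2^{p+1}$. Feeding this back into $T^{[2]}_{2p}(-1,-1)=T^{[2]}_{2p-1}(-1,-1)+F_{2p+1}$ and using the Fibonacci identity $F_{2p+2}+F_{2p+1}=F_{2p+3}$ gives $T^{[2]}_{2p}(-1,-1)=F_{2p+3}-2^p$.

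Finally I would unify the two parities into the single floor-function formula. For $n=2p+1$ one has $\lfloor(n+1)/2\rfloor=p+1$ and $F_{n+3}=F_{2p+4}$, while for $n=2p$ one has $\lfloor(n+1)/2\rfloor=p$ and $F_{n+3}=F_{2p+3}$; in both cases the expressions just obtained coincide with $F_{n+3}-2^{\lfloor(n+1)/2\rfloor}$, after checking the base cases $n=0$ and $n=1$ directly. The delicate points I anticipate are the bookkeeping of the summation bounds when applying Pascal's rule to derive the coupled system, and verifying that the exponent $\lfloor(n+1)/2\rfloor$ stitches the even and odd formulas together correctly.
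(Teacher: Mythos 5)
Your proposal is correct and follows essentially the same route as the paper: identifying the sum as $T^{[2]}_n(-1,-1)$, deriving the parity-coupled recurrences via Pascal's rule and the identity $\sum_{k}\binom{n-k}{k}=F_{n+1}$, decoupling to $T^{[2]}_{2p+1}(-1,-1)-2T^{[2]}_{2p-1}(-1,-1)=F_{2p+1}$, and resolving via Lemma~\ref{lemma} and Relation~(\ref{relsumfibo2}) before recombining the parities. No meaningful differences from the paper's own argument.
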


\begin{figure}[!t]
\begin{center}
{\scalefont{0.9}
\begin{tikzpicture}[baseline=-\the\dimexpr\fontdimen22\textfont2\relax ]
\matrix[matrix of math nodes,row sep=5pt,column sep = 5pt,
 column 11/.style={anchor=base west}, column 12/.style={anchor=base west},] (m) {
1&&&&&&&&&&1=1&\\
1&2&&&&&&&&&1=1&\\
1&3&4&&&&&&&&3=1+2&=1+F_3\\
1&4&7&8&&&&&&&4=1+3&=3+1\\
1&5&11&15&16&&&&&&9=1+4+4&=4+F_5\\
1&6&16&26&31&32&&&&&13=1+5+7&=9+4\\
1&7&22&42&57&63&64&&&&26=1+6+11+8&=13+F_7\\
1&8&29&64&99&120&127&128&&&39=1+7+16+15&=26+13\\
1&9&37&93&163&219&247&255&256&&73=1+8+22+26+16&=39+F_9\\
};

\begin{pgfonlayer}{myback}
\foreach \element in {m-3-2,m-4-1,m-4-11}{
\fhighlight[blue!20]{\element}{\element}
}
\foreach \element in {m-3-3,m-4-2,m-5-1,m-5-11}{
\fhighlight[red!20]{\element}{\element}
}
\foreach \element in {m-4-3,m-5-2,m-6-1,m-6-11}{
\fhighlight[green!20]{\element}{\element}
}
\foreach \element in {m-4-4,m-5-3,m-6-2,m-7-1,m-7-11}{
\fhighlight[orange!20]{\element}{\element}
}
\end{pgfonlayer}
\end{tikzpicture}
}
\captionof{figure}{Instances of $T^{[2]}_n\left(-1,-1\right)$ in Bernoulli's triangle and differences from the preceding terms.}
\label{figdiagonalesT2}
\end{center}
\end{figure}

\begin{remark}
Among the paths of Bernoulli's triangle following other directions, one can find other sequences that follow linear recurrence relations. For example, $\left(\bar S^{[2]}_n(3,-1)\right)_{n\in\No}$ corresponds to \seqnum{A005251}~\cite{oeis} and $\left(\bar S^{[2]}_n(4,-1)\right)_{n\in\No}$ to \seqnum{A138653}~\cite{oeis}. We believe that further sequences related to partial sums of binomial coefficients are yet to be uncovered in $\mathcal B^{[2]}$.
\end{remark}

\section{Bernoulli's third-order triangle}
We now consider Bernoulli's third-order triangle $\mathcal B^{[3]}$ (see Figure~\ref{figtriangleB3}). From Definition~\ref{defBm},
$$\forall n\in\No,k\in\llbracket0,n\rrbracket,\;B^{[3]}_{n,k}=\displaystyle\sum_{q=0}^kB_{n,q}=\displaystyle\sum_{q=0}^k\displaystyle\sum_{r=0}^q\binom nr.$$
Recall that $B^{[3]}_{n,0}=1$ and $\forall n\geqslant2,\;\forall k\in\llbracket1,n-1\rrbracket,\;B^{[3]}_{n,k}=B^{[3]}_{n-1,k}+B^{[3]}_{n-1,k-1}$. Furthermore, $B^{[3]}_{n,n}=(n+2)2^{n-1}$.
\begin{figure}[!h]
\begin{center}
{\scalefont{0.9}
\begin{tikzpicture}
\matrix[matrix of math nodes,row sep=2pt,column sep = 2pt] (m) {
&k&0&1&2&3&4&5&6&7&8&9\\
n\\
0&&1\\
1&&1&3\\
2&&1&4&8\\
3&&1&5&12&20\\
4&&1&6&17&32&48\\
5&&1&7&23&49&80&112\\
6&&1&8&30&72&129&192&256\\
7&&1&9&38&102&201&321&448&576\\
8&&1&10&47&140&303&522&769&1024&1280\\
9&&1&11&57&187&443&825&1291&1793&2304&2816\\
};
\end{tikzpicture}
}
\captionof{figure}{Bernoulli's third-order triangle (\seqnum{A193605}~\cite{oeis}): the element in row $n$ and column $k$ corresponds to $B^{[3]}_{n,k}$.}
\label{figtriangleB3}
\end{center}
\end{figure}
\subsection{The path following direction \texorpdfstring{$(2,-1)$}{(2,-1)} from \texorpdfstring{$(n,n)$}{(n,n)}}
Definition~\ref{defpathsST} states that $\bar S^{[3]}_n\left(2,-1\right)=B^{[3]}_{n,n} - \displaystyle\sum_{k=1}^{\lfloor n/2 \rfloor}{B^{[3]}_{n-k,n-2k}}$. Figure~\ref{figdiagonalesS3} suggests the following recurrence relation:
\begin{equation}
\label{relrecS3}
\bar S^{[3]}_n\left(2,-1\right)-2\bar S^{[3]}_{n-1}\left(2,-1\right)=F_n.
\end{equation}

\begin{figure}[!t]
\begin{center}
{\scalefont{0.9}
\begin{tikzpicture}[baseline=-\the\dimexpr\fontdimen22\textfont2\relax]

\matrix[matrix of math nodes,row sep=5pt,column sep = 5pt,
 column 12/.style={anchor=base west},] (m) {
1&&&&&&&&&&\quad&1=1\\
1&3&&&&&&&&&\quad&3=3&=2\cdot1+F_1\\
1&4&8&&&&&&&&\quad&7=8-1&=2\cdot3+F_2\\
1&5&12&20&&&&&&&\quad&16=20-4&=2\cdot7+F_3\\
1&6&17&32&48&&&&&&\quad&35=48-12-1&=2\cdot16+F_4\\
1&7&23&49&80&112&&&&&\quad&75=112-32-5&=2\cdot35+F_5\\
1&8&30&72&129&192&256&&&&\quad&158=256-80-17-1&=2\cdot75+F_6\\
1&9&38&102&201&321&448&576&&&\quad&329=576-192-49-6&=2\cdot158+F_7\\
};
\begin{pgfonlayer}{myback}
\foreach \element in {m-3-2,m-4-4,m-4-12}{
\fhighlight[blue!20]{\element}{\element}
}
\foreach \element in {m-3-1,m-4-3,m-5-5,m-5-12}{
\fhighlight[red!20]{\element}{\element}
}
\foreach \element in {m-4-2,m-5-4,m-6-6,m-6-12}{
\fhighlight[green!20]{\element}{\element}
}
\foreach \element in {m-4-1,m-5-3,m-6-5,m-7-7,m-7-12}{
\fhighlight[orange!20]{\element}{\element}
}
\end{pgfonlayer}

\end{tikzpicture}
}
\captionof{figure}{Instances of $\bar S^{[3]}_n\left(2,-1\right)$ in $\mathcal B^{[3]}$ that illustrate Relation~(\ref{relrecS3}).}
\label{figdiagonalesS3}
\end{center}
\end{figure}

\begin{proof}
Let $n\in\Na$. We have $$\bar
S^{[3]}_n\left(2,-1\right)-2\bar S^{[3]}_{n-1}\left(2,-1\right)=\displaystyle B^{[3]}_{n,n}-\sum_{k=1}^{\lfloor n/2\rfloor}B^{[3]}_{n-k,n-2k}-2B^{[3]}_{n-1,n-1}+2\sum_{k=1}^{\lfloor (n-1)/2\rfloor}B^{[3]}_{n-1-k,n-1-2k}.$$

Firstly, $B^{[3]}_{n,n}-2B^{[3]}_{n-1,n-1}=(n+2)2^{n-1}-2(n+1)2^{n-2}=2^{n-1}$.

Let us assume that $n$ is even, e.g., $n=2s$ (the method is the same for odd values of $n$). We have
\begin{eqnarray*}
\bar S^{[3]}_n\left(2,-1\right)-2\bar S^{[3]}_{n-1}\left(2,-1\right)&=&2^{2s-1}-\sum_{k=1}^{s}B^{[3]}_{2s-k,2s-2k}+2\sum_{k=1}^{s-1}B^{[3]}_{2s-1-k,2s-1-2k}\\
&=&2^{2s-1}-\underbrace{B^{[3]}_{s,0}}_1-\sum_{k=1}^{s-1}\left(B^{[3]}_{2s-k,2s-2k}-2B^{[3]}_{2s-1-k,2s-1-2k}\right).\\
\end{eqnarray*}

From Definition~\ref{defBm} and Relation~(\ref{relrecBm}), we have, for $m\geqslant2$,
$$\forall p,q\geqslant1,\;B^{[m]}_{p,q}-2B^{[m]}_{p-1,q-1}=B^{[m]}_{p-1,q}-B^{[m]}_{p-1,q-1}=B^{[m-1]}_{p-1,q},$$
hence,
$$\bar S^{[3]}_n\left(2,-1\right)-2\bar S^{[3]}_{n-1}\left(2,-1\right)=2^{2s-1}-1-\sum_{k=1}^{s-1}B_{2s-1-k,2s-2k}.$$

Moreover,
\begin{eqnarray*}
\sum_{k=1}^{s-1}B_{2s-1-k,2s-2k}&=&\sum_{k=2}^{s}B_{2s-k,2s-2k+2}\\
&=&\sum_{k=2}^{s}\left(B_{2s-k,2s-2k}+\binom{2s-k}{2s-2k+1}+\binom{2s-k}{2s-2k+2}\right)\\
&=&\sum_{k=2}^{s}B_{2s-k,2s-2k}+\sum_{k=2}^{s}\binom{2s-k+1}{2s-2k+2}\\
&=&S^{[2]}_{2s}\left(2,-1\right)-\underbrace{B_{2s,2s}}_{2^{2s}}-B_{2s-1,2s-2}+\sum_{k=2}^{s}\binom{2s-k+1}{k-1}.
\end{eqnarray*}

Theorem~\ref{theoremS2} states that $S^{[2]}_{2s}\left(2,-1\right)=2^{2s+1}-F_{2s+2}$.

We also have $B_{2s-1,2s-2}=B_{2s-1,2s-1}-\displaystyle\binom{2s-1}{2s-1}=2^{2s-1}-1$. Furthermore, \begin{eqnarray*}\displaystyle\sum_{k=2}^{s}\binom{2s+1-k}{k-1}
&=&\sum_{k=1}^{s}\binom{2s-k+1}{k-1}-\binom{2s}0\\
&=&\sum_{k=0}^{s-1}\binom{2s-k}{k}-1\\
&=&\sum_{k=0}^{s}\binom{2s-k}{k}-\binom ss-1\\
&=&F_{2s+1}-2.
\end{eqnarray*}

Finally,
\begin{eqnarray*}
\bar S^{[3]}_n\left(2,-1\right)-2\bar S^{[3]}_{n-1}\left(2,-1\right)&=&2^{2s-1}-1-\left(2^{2s+1}-F_{2s+2}-2^{2s}-(2^{2s-1}-1)+F_{2s+1}-2\right)\\
&=&F_{2s+2}-F_{2s+1}\\
&=&F_{2s}\\
&=&F_{n}.
\end{eqnarray*}
\end{proof}

Lemma~\ref{lemma} and Relation~(\ref{relsumfibo}) lead to an explicit expression of $\bar S^{[3]}_n\left(2,-1\right)$, as follows:
\begin{eqnarray*}
\bar S^{[3]}_n\left(2,-1\right)&=&2^n\bar S^{[3]}_0\left(2,-1\right)+\displaystyle\sum_{k=1}^n2^{n-k}F_k\\
&=&2^n+\displaystyle\sum_{k=1}^{n+1}2^{n+1-k}F_{k-1}\\
&=&2^n+\left(2^{n+1}-F_{n+3}\right)\\
&=&3\cdot 2^n-F_{n+3}.
\end{eqnarray*}

From the explicit form of $\bar S^{[3]}_n\left(2,-1\right)$, we deduce that $S^{[3]}_n\left(2,-1\right)=2B^{[3]}_{n,n}+F_{n+3}-3\cdot 2^n$ and obtain the following theorem.
\begin{theorem}
\label{theoremS3}
$$\displaystyle\sum_{k=0}^{\lfloor n/2 \rfloor}\sum_{q=0}^{n-2k}\sum_{r=0}^{q}\binom {n-k}r = F_{n+3}+(n-1)2^n.$$
\end{theorem}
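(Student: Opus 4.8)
The plan is to recognize that the triple sum on the left-hand side is precisely the path sum $S^{[3]}_n\left(2,-1\right)$, and then to convert it into closed form by way of its complementary sum $\bar S^{[3]}_n\left(2,-1\right)$, whose explicit value has just been obtained. Indeed, unfolding Definition~\ref{defBm} twice gives $B^{[3]}_{n-k,n-2k}=\sum_{q=0}^{n-2k}\sum_{r=0}^{q}\binom{n-k}{r}$, so summing over $k\in\llbracket 0,\lfloor n/2\rfloor\rrbracket$ identifies the left-hand side with $S^{[3]}_n\left(2,-1\right)=\sum_{k=0}^{\lfloor n/2\rfloor}B^{[3]}_{n-k,n-2k}$ as given in Definition~\ref{defpathsST}. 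The whole theorem thus reduces to exhibiting a closed form for this one path sum.

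For the conversion I would invoke the defining identity $\bar S^{[3]}_n\left(2,-1\right)=2B^{[3]}_{n,n}-S^{[3]}_n\left(2,-1\right)$, rewritten as $S^{[3]}_n\left(2,-1\right)=2B^{[3]}_{n,n}-\bar S^{[3]}_n\left(2,-1\right)$. Substituting the boundary value $B^{[3]}_{n,n}=(n+2)2^{n-1}$ and the explicit form $\bar S^{[3]}_n\left(2,-1\right)=3\cdot 2^n-F_{n+3}$ established immediately above, a one-line arithmetic simplification yields $S^{[3]}_n\left(2,-1\right)=(n+2)2^n-3\cdot 2^n+F_{n+3}=(n-1)2^n+F_{n+3}$, which is exactly the asserted identity. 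A quick check at $n=3$ (giving $20+4=24=F_6+2\cdot 8$) confirms the bookkeeping.

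The genuinely substantive part of the argument is therefore not this final conversion but the prior derivation of $\bar S^{[3]}_n\left(2,-1\right)$, and I expect the main obstacle to sit in Relation~(\ref{relrecS3}), namely $\bar S^{[3]}_n\left(2,-1\right)-2\bar S^{[3]}_{n-1}\left(2,-1\right)=F_n$. Once that recurrence is in hand, Lemma~\ref{lemma} applied to $u_n=\bar S^{[3]}_n\left(2,-1\right)$ together with Relation~(\ref{relsumfibo}) delivers the closed form mechanically. To prove the recurrence I would split the path difference according to the parity of $n$, apply the third-order analogue $B^{[3]}_{p,q}-2B^{[3]}_{p-1,q-1}=B^{[2]}_{p-1,q}$ of Relation~(\ref{relB2diff}) to collapse each telescoping term to a second-order partial sum, and then re-express the resulting sum of $B$-values in terms of $S^{[2]}_{n}\left(2,-1\right)$ (known from Theorem~\ref{theoremS2}) and the Benjamin--Quinn identity $\sum_{k=0}^{\lfloor n/2\rfloor}\binom{n-k}{k}=F_{n+1}$. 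The delicate point is tracking the index shifts and the leftover boundary terms through this reduction; after that, the closing Fibonacci cancellation $F_{2s+2}-F_{2s+1}=F_{2s}$ is immediate.
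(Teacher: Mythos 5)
Your proposal is correct and follows essentially the same route as the paper: it identifies the triple sum with $S^{[3]}_n\left(2,-1\right)$, converts via $S^{[3]}_n\left(2,-1\right)=2B^{[3]}_{n,n}-\bar S^{[3]}_n\left(2,-1\right)$ with $B^{[3]}_{n,n}=(n+2)2^{n-1}$, and locates the real work in the recurrence $\bar S^{[3]}_n\left(2,-1\right)-2\bar S^{[3]}_{n-1}\left(2,-1\right)=F_n$, which you propose to prove exactly as the paper does (parity split, the identity $B^{[3]}_{p,q}-2B^{[3]}_{p-1,q-1}=B^{[2]}_{p-1,q}$, reduction to $S^{[2]}_{2s}\left(2,-1\right)$ and the Benjamin--Quinn identity). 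No gaps; the sketch matches the paper's argument step for step.
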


\subsection{The path following direction \texorpdfstring{$(-1,-1)$}{(-1,-1)} from \texorpdfstring{$(n,0)$}{(n,0)}}

In $\mathcal B^{[3]}$, the path following direction $(-1,-1)$ from $(n,0)$ does not appear at first glance to have any obvious connection to the Fibonacci sequence; however, the differences of consecutive terms of $T^{[3]}_n(-1,-1)$ follow a pattern similar to $T^{[2]}_n(-1,-1)$. Indeed, the odd subsequence of the difference sequence has the same behavior as in $\mathcal B^{[2]}$, while the even subsequence has a connection to the $T^{[2]}_{n}(-1,-1)$ numbers of even $n$ (see Figure~\ref{figdiagonalesT3}).

More precisely, we notice that $\forall p\geqslant1,\;T^{[3]}_{2p}(-1,-1)=T^{[3]}_{2p-1}(-1,-1)+T^{[2]}_{2p}(-1,-1)$ and $T^{[3]}_{2p+1}(-1,-1)=T^{[3]}_{2p}(-1,-1)+T^{[3]}_{2p-1}(-1,-1)$.

\begin{figure}[!t]
\begin{center}
{\scalefont{0.9}
\begin{tikzpicture}[font=\small, baseline=-\the\dimexpr\fontdimen22\textfont2\relax ]

\matrix[matrix of math nodes,row sep=5pt,column sep = 5pt,
 column 11/.style={anchor=base west}, column 12/.style={anchor=base west},] (m) {
1&&&&&&&&&&1=1&\\
1&3&&&&&&&&&1=1&\\
1&4&8&&&&&&&&4=1+3&=1+T^{[2]}_2(-1,-1)\\
1&5&12&20&&&&&&&5=1+4&=4+1\\
1&6&17&32&48&&&&&&14=1+5+8&=5+T^{[2]}_4(-1,-1)\\
1&7&23&49&80&112&&&&&19=1+6+12&=14+5\\
1&8&30&72&129&192&256&&&&45=1+7+17+20&=19+T^{[2]}_6(-1,-1)\\
1&9&38&102&201&321&448&576&&&64=1+8+23+32&=45+19\\
};
\begin{pgfonlayer}{myback}\foreach \element in {m-3-2,m-4-1,m-4-11}{
\fhighlight[blue!20]{\element}{\element}
}
\foreach \element in {m-3-3,m-4-2,m-5-1,m-5-11}{
\fhighlight[red!20]{\element}{\element}
}
\foreach \element in {m-4-3,m-5-2,m-6-1,m-6-11}{
\fhighlight[green!20]{\element}{\element}
}
\foreach \element in {m-4-4,m-5-3,m-6-2,m-7-1,m-7-11}{
\fhighlight[orange!20]{\element}{\element}
}
\end{pgfonlayer}
\end{tikzpicture}
}
\captionof{figure}{Differences between successive terms of $T^{[3]}_n\left(-1,-1\right)$ in Bernoulli's third-order triangle.}
\label{figdiagonalesT3}
\end{center}
\end{figure}

\begin{proof}
We prove these two relations directly at order $m$, using Relation~(\ref{relrecBm}) and proceeding as in the derivation of Theorem~\ref{resT2} (section~\ref{secT2}). For odd terms, we have
\begin{eqnarray}
\forall m\geqslant2,\forall p\geqslant1,\;
T^{[m]}_{2p+1}(-1,-1)&=&\displaystyle\sum_{k=0}^{p}B^{[m]}_{2p+1-k,k}\nonumber\\
&=&B^{[m]}_{2p+1,0}+\displaystyle\sum_{k=1}^{p}\left(B^{[m]}_{2p-k,k-1}+B^{[m]}_{2p-k,k}\right)\nonumber\\
&=&B^{[m]}_{2p,0}+\displaystyle\sum_{k=1}^{p}B^{[m]}_{2p-k,k}+\displaystyle\sum_{k=0}^{p-1}B^{[m]}_{2p-1-k,k}\nonumber\\
&=&\displaystyle\sum_{k=0}^{p}B^{[m]}_{2p-k,k}+\displaystyle\sum_{k=0}^{p-1}B^{[m]}_{2p-1-k,k},\nonumber\\
T^{[m]}_{2p+1}(-1,-1)&=&T^{[m]}_{2p}(-1,-1)+T^{[m]}_{2p-1}(-1,-1).
\label{relrecTmodd}
\end{eqnarray}
For even terms,
\begin{eqnarray}
\forall m\geqslant2,\forall p\geqslant1,\;T^{[m]}_{2p}(-1,-1)&=&\displaystyle\sum_{k=0}^{p}B^{[m]}_{2p-k,k}\nonumber\\
&=&\displaystyle\sum_{k=0}^{p}\displaystyle\sum_{q=0}^{k}B^{[m-1]}_{2p-k,q}\nonumber\\
&=&\displaystyle\sum_{k=0}^{p}\left(\displaystyle\sum_{q=0}^{k-1}B^{[m-1]}_{2p-k,q}+B^{[m-1]}_{2p-k,k}\right)\nonumber\\
&=&\displaystyle\sum_{k=0}^{p-1}\displaystyle\sum_{q=0}^{k}B^{[m-1]}_{2p-1-k,q}+\displaystyle\sum_{k=0}^{p}B^{[m-1]}_{2p-k,k}\nonumber\\
&=&\displaystyle\sum_{k=0}^{p-1}B^{[m]}_{2p-1-k,k}+T^{[m-1]}_{2p}(-1,-1),\nonumber\\
T^{[m]}_{2p}(-1,-1)&=&T^{[m]}_{2p-1}(-1,-1)+T^{[m-1]}_{2p}(-1,-1).
\label{relrecTmeven}
\end{eqnarray}
\end{proof}

From Relations~(\ref{relrecTmodd}) and~(\ref{relrecTmeven}), we derive a general expression of $T^{[3]}_n(-1,-1)$ for $n\in\No$. Isolating the odd subsequence leads to
$$\forall p\geqslant1,\;T^{[3]}_{2p+1}(-1,-1)-2T^{[3]}_{2p-1}(-1,-1)=T^{[2]}_{2p}(-1,-1).$$

Therefore, using Lemma~\ref{lemma} and Relation~(\ref{relsumfibo2}), we obtain
\begin{eqnarray*}
\forall p\geqslant1,\;T^{[3]}_{2p+1}(-1,-1)&=&2^p\underbrace{T^{[3]}_{1}(-1,-1)}_1+\displaystyle\sum_{k=1}^p2^{p-k}\underbrace{T^{[2]}_{2k}(-1,-1)}_{F_{2k+3}-2^k}\\
&=&2^p+\displaystyle\sum_{k=1}^p2^{p-k}F_{2k+3}-\sum_{k=1}^p2^p\\
&=&2^p+F_{2p+6}-2^{p+3}-p2^p\\
&=&F_{2p+6}-(p+7)2^p.
\end{eqnarray*}
Hence,
\begin{eqnarray*}
\forall p\geqslant1,\;T^{[3]}_{2p}(-1,-1)&=&T^{[3]}_{2p-1}(-1,-1)+T^{[2]}_{2p}(-1,-1)\\
&=&F_{2p+4}-(p+6)2^{p-1}+F_{2p+3}-2^p\\
&=&F_{2p+5}-(p+8)2^{p-1}.
\end{eqnarray*}

Finally, we reach the following relation:
$$\forall n\in\No,\;T^{[3]}_{n}(-1,-1)=F_{n+5}-2^{\left\lfloor(n+1)/2\right\rfloor}\left(\frac12\left\lfloor(n+1)/2\right\rfloor+\frac72+\frac{(-1)^n}2\right).$$
\begin{theorem}
\label{resT3}
$$\forall n\in\No,\;\displaystyle\sum_{k=0}^{\left\lfloor n/2\right\rfloor}\sum_{q=0}^k\sum_{r=0}^q\binom{n-k}r=F_{n+5}-2^p\left(\frac12p+\frac72+\frac{(-1)^n}2\right),$$
where $p={\left\lfloor(n+1)/2\right\rfloor}$.
\end{theorem}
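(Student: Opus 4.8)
The plan is to recognize that the triple sum on the left is exactly $T^{[3]}_n(-1,-1)$. Indeed, by Definition~\ref{defBm} one has $B^{[3]}_{n-k,k}=\sum_{q=0}^{k}\sum_{r=0}^{q}\binom{n-k}{r}$, so that $\sum_{k=0}^{\lfloor n/2\rfloor}B^{[3]}_{n-k,k}$ coincides with the stated triple sum, and this is precisely the quantity $T^{[3]}_n(-1,-1)$ of Definition~\ref{defpathsST}(ii). It therefore suffices to produce a closed form for $T^{[3]}_n(-1,-1)$, and the theorem is just the explicit form of that closed form written out as nested sums.

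First I would imitate the argument that settled Theorem~\ref{resT2}, but one order higher. Splitting each order-$3$ path sum according to the parity of the starting row and applying Pascal's rule~(\ref{relrecBm}) produces the two coupling relations $T^{[3]}_{2p+1}(-1,-1)=T^{[3]}_{2p}(-1,-1)+T^{[3]}_{2p-1}(-1,-1)$ and $T^{[3]}_{2p}(-1,-1)=T^{[3]}_{2p-1}(-1,-1)+T^{[2]}_{2p}(-1,-1)$. The second relation is the crucial one: it expresses the even-to-odd increment in terms of the order-$2$ triangle rather than staying inside $\mathcal B^{[3]}$, and so lets us import the already-known behaviour of $T^{[2]}$. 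Eliminating the even-indexed terms between the two relations then yields the second-order recurrence $T^{[3]}_{2p+1}(-1,-1)-2T^{[3]}_{2p-1}(-1,-1)=T^{[2]}_{2p}(-1,-1)$ on the odd subsequence.

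Next I would invoke Theorem~\ref{resT2} in the form $T^{[2]}_{2p}(-1,-1)=F_{2p+3}-2^{p}$ to make the inhomogeneity explicit, and apply Lemma~\ref{lemma} with the running index $p$ (so that $u_p=T^{[3]}_{2p+1}(-1,-1)$, $u_0=1$, and $v_p=T^{[2]}_{2p}(-1,-1)$). This produces the weighted sum $2^{p}+\sum_{k=1}^{p}2^{p-k}\bigl(F_{2k+3}-2^{k}\bigr)$. The geometric-looking part $-\sum_{k=1}^{p}2^{p-k}2^{k}=-p\,2^{p}$ is immediate, while the Fibonacci part $\sum_{k=1}^{p}2^{p-k}F_{2k+3}$ is evaluated by Relation~(\ref{relsumfibo2}) after a shift of summation index (writing $F_{2k+3}=F_{2(k+1)+1}$ and summing up to $p+1$), giving $F_{2p+6}-2^{p+3}$. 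Combining yields the closed form for the odd subsequence, and substituting it back into the even coupling relation gives the closed form for the even subsequence.

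The step I expect to be the main obstacle is not any single computation but the final parity bookkeeping. The odd and even subsequences emerge with genuinely different closed forms, and merging them into one expression valid for all $n\in\No$ requires rewriting everything in terms of $p=\lfloor(n+1)/2\rfloor$ and absorbing the parity-dependent constants into the factor $\tfrac12 p+\tfrac72+\tfrac{(-1)^n}{2}$; one checks that for odd $n$ this factor and the power $2^{p}$ reproduce $(p+7)2^{p}$, and for even $n$ they reproduce $(p+8)2^{p-1}$. Care is also needed with the Fibonacci index shifts---the $+5$ offset in $F_{n+5}$ arises from compounding the order-$2$ offset with one further summation---and with tracking the powers of two, since an off-by-one there would corrupt the coefficient of $2^{p}$. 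A comparison against the small values of $T^{[3]}_n(-1,-1)$ read off from Figure~\ref{figtriangleB3} would serve as the natural sanity check.
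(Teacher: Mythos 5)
Your proposal is correct and follows essentially the same route as the paper: identifying the triple sum with $T^{[3]}_n(-1,-1)$, deriving the two coupling relations (the paper proves them once at general order $m$ as Relations~(\ref{relrecTmodd}) and~(\ref{relrecTmeven})), eliminating to get $T^{[3]}_{2p+1}(-1,-1)-2T^{[3]}_{2p-1}(-1,-1)=T^{[2]}_{2p}(-1,-1)$, solving via Lemma~\ref{lemma} and Relation~(\ref{relsumfibo2}), and back-substituting for the even subsequence before merging the parities. All the intermediate closed forms you cite, including $F_{2p+6}-2^{p+3}$ for the weighted Fibonacci sum and the final parity factor, match the paper's computation.
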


\begin{remark}
Paths in $\mathcal B^{[3]}$ following other directions yield interesting sequences in a similar manner to those in $\mathcal B^{[2]}$. For example, $\left(\bar S^{[3]}_n(3,-1)-2\bar S^{[3]}_{n-1}(3,-1)\right)_{n\in\Na}$ corresponds to \seqnum{A005314}~\cite{oeis}. Further work should be able to uncover additional sequences by following other paths in $\mathcal B^{[3]}$.
\end{remark}

\section{Higher-order triangles}
\subsection{The path following direction \texorpdfstring{$(-1,-1)$}{(-1,-1)} from \texorpdfstring{$(n,0)$}{(n,0)}}
Using Relations~(\ref{relrecTmodd}) and~(\ref{relrecTmeven}) for $m=4$ and $m=5$, we proceed as we did for $T^{[3]}_{n}(-1,-1)$ to determine the sums over the path following direction $(-1,-1)$ from $(n,0)$ in $\mathcal B^{[4]}$ and $\mathcal B^{[5]}$. We obtain the following identities:
$$\begin{array}{rcl}
\forall n\in\No,\;\displaystyle\sum_{k=0}^{\lfloor n/2 \rfloor}\sum_{q=0}^{k}\sum_{r=0}^{q}\sum_{s=0}^{r}\binom {n-k}s&=&F_{n+7}-2^{p}\left(\dfrac 18p^2+\dfrac{17}8p+10+(-1)^n\left(\dfrac14 p+2\right)\right),\\
\text{and }\displaystyle\sum_{k=0}^{\lfloor n/2 \rfloor}\sum_{q=0}^{k}\sum_{r=0}^{q}\sum_{s=0}^{r}\sum_{t=0}^{s}\binom {n-k}t&=&\\\multicolumn{3}{r}{F_{n+9}-2^{p}\left(\dfrac1{48}{p^3}+\dfrac{5}{8}p^2+\dfrac{317}{48}p+27+(-1)^n\left(\dfrac1{16}p^2+\dfrac{19}{16}p+6\right)\right)},
\end{array}$$
where $p=\left\lfloor\frac{n+1}{2}\right\rfloor$.

The expressions of $T^{[m]}_{n}(-1,-1)$ for $m\in\llbracket2,5\rrbracket$ contain a Fibonacci term and a second term composed of the product of a power of $2$ by a polynomial, which suggests a general connection between $T^{[m]}_{n}(-1,-1)$  and the Fibonacci sequence. We write a general expression of $T^{[m]}_{n}(-1,-1)$ in the following theorem.
\begin{theorem}
$$\forall m\in\Na,\forall n\in\No,\;\displaystyle\sum_{i_1=0}^{\left\lfloor n/2\right\rfloor}\sum_{i_2=0}^{i_1}\cdots\sum_{i_m=0}^{i_{m-1}}\binom{n-i_1}{i_m}=F_{n+2m-1}-2^p\left(Q^{[m]}\left(p\right)+(-1)^nR^{[m]}\left(p\right)\right),$$
where $p=\left\lfloor\frac{n+1}{2}\right\rfloor$, $Q^{[m]}$ and $R^{[m]}$ are polynomials with coefficients in $\mathbb Q$ and of degree $(m-2)^+$ and $(m-3)^+$ respectively.
\label{theoremTm}
\end{theorem}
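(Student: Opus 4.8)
The plan is to prove Theorem~\ref{theoremTm} by induction on the order $m$, using the two recurrences~(\ref{relrecTmodd}) and~(\ref{relrecTmeven}) to descend from order $m$ to order $m-1$. The base cases are already available: for $m=1$ one has $T^{[1]}_n(-1,-1)=\sum_{k=0}^{\lfloor n/2\rfloor}\binom{n-k}{k}=F_{n+1}$, so $Q^{[1]}=R^{[1]}=0$; and Theorem~\ref{resT2} supplies the $m=2$ case with $Q^{[2]}=1$, $R^{[2]}=0$. I would run the inductive step for $m\geqslant3$, so that at each stage the induction hypothesis already guarantees that $Q^{[m-1]}$ is a \emph{nonzero} polynomial (its degree being $(m-3)^+\geqslant0$ with nonvanishing leading coefficient, or $Q^{[2]}=1$). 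Each step should simultaneously produce the formula for the odd subsequence $T^{[m]}_{2p+1}(-1,-1)$ and then, from it, the even subsequence $T^{[m]}_{2p}(-1,-1)$.

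For the odd subsequence, combining~(\ref{relrecTmodd}) and~(\ref{relrecTmeven}) gives $T^{[m]}_{2p+1}(-1,-1)-2T^{[m]}_{2p-1}(-1,-1)=T^{[m-1]}_{2p}(-1,-1)$, exactly as in the treatment of $\mathcal B^{[3]}$. Applying Lemma~\ref{lemma} to $u_p=T^{[m]}_{2p+1}(-1,-1)$ (with $u_0=T^{[m]}_1(-1,-1)=1$ and $v_p=T^{[m-1]}_{2p}(-1,-1)$) yields
$$T^{[m]}_{2p+1}(-1,-1)=2^p+\sum_{k=1}^p 2^{p-k}\,T^{[m-1]}_{2k}(-1,-1).$$
Into this I substitute the induction hypothesis at the even index $2k$, where $(-1)^{2k}=1$ and $\lfloor(2k+1)/2\rfloor=k$, namely $T^{[m-1]}_{2k}(-1,-1)=F_{2k+2m-3}-2^k\bigl(Q^{[m-1]}(k)+R^{[m-1]}(k)\bigr)$. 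The Fibonacci part is evaluated by the auxiliary identity
$$\sum_{k=1}^p 2^{p-k}F_{2k+j}=F_{2p+j+3}-2^pF_{j+3},$$
which is Lemma~\ref{lemma} applied to $u_p=F_{2p+j+3}$ (generalizing Relation~(\ref{relsumfibo2})); with $j=2m-3$ it contributes the leading term $F_{2p+2m}=F_{n+2m-1}$. The polynomial part collapses by ordinary partial summation, leaving $T^{[m]}_{2p+1}(-1,-1)=F_{2p+2m}-2^p\Pi(p)$ with $\Pi(p)=F_{2m}-1+\sum_{k=1}^p\bigl(Q^{[m-1]}(k)+R^{[m-1]}(k)\bigr)$.

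For the even subsequence I would feed this back through~(\ref{relrecTmeven}), $T^{[m]}_{2p}(-1,-1)=T^{[m]}_{2p-1}(-1,-1)+T^{[m-1]}_{2p}(-1,-1)$, where the Fibonacci indices merge via $F_{2p+2m-2}+F_{2p+2m-3}=F_{2p+2m-1}$. Writing $P=\lfloor(n+1)/2\rfloor$ (which is $p$ for even $n$ and $p+1$ for odd $n$) and matching the sign $(-1)^n$, the odd case gives $Q^{[m]}(P)-R^{[m]}(P)$ and the even case gives $Q^{[m]}(P)+R^{[m]}(P)$; solving these two linear relations produces the closed recursion
$$R^{[m]}(P)=\tfrac12\bigl(Q^{[m-1]}(P)+R^{[m-1]}(P)\bigr),\qquad Q^{[m]}(P)=R^{[m]}(P)+\tfrac12\bigl(F_{2m}-1\bigr)+\tfrac12\sum_{k=1}^{P-1}\bigl(Q^{[m-1]}(k)+R^{[m-1]}(k)\bigr).$$
Both right-hand sides are polynomials in $P$ with rational coefficients, since summing a polynomial over $k\in\llbracket1,P-1\rrbracket$ returns a polynomial of degree one higher, which establishes $Q^{[m]},R^{[m]}\in\mathbb Q[P]$. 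The degrees then follow: summation raises the degree by exactly one when the summand is nonzero, so $\deg Q^{[m]}=\deg Q^{[m-1]}+1=(m-3)^++1=(m-2)^+$, while $R^{[m]}=\tfrac12\bigl(Q^{[m-1]}+R^{[m-1]}\bigr)$ inherits $\deg R^{[m]}=\deg Q^{[m-1]}=(m-3)^+$.

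The main obstacle is the bookkeeping rather than any single hard estimate: one must verify that $Q^{[m-1]}$ genuinely has a nonzero leading coefficient (so the partial sum raises the degree to $(m-2)^+$ instead of lower, which is why $m=1,2$ are handled as base cases), and one must track the index shift $p\leftrightarrow P=\lfloor(n+1)/2\rfloor$ together with the factors $2^{p}$ versus $2^{P}$ and the various Fibonacci index shifts, so that the odd-case and even-case expressions assemble consistently into a single pair $(Q^{[m]},R^{[m]})$ valid for both parities. Everything else is routine polynomial algebra, and I would validate the two recursions above against the explicit data already tabulated for $m\in\{2,3,4,5\}$.
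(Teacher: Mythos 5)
Your proposal is correct and follows essentially the same route as the paper's own proof: induction on $m$ with base cases $m=1,2$, the telescoped odd-index recurrence $T^{[m]}_{2p+1}-2T^{[m]}_{2p-1}=T^{[m-1]}_{2p}$ resolved via Lemma~\ref{lemma} and the Fibonacci summation identity, the even subsequence recovered from Relation~(\ref{relrecTmeven}), and the identical recursion $R^{[m]}=\tfrac12\bigl(Q^{[m-1]}+R^{[m-1]}\bigr)$, $Q^{[m]}=R^{[m]}+\tfrac12\bigl(F_{2m}-1\bigr)+\tfrac12\sum_{k=1}^{P-1}\bigl(Q^{[m-1]}+R^{[m-1]}\bigr)(k)$ with the same degree bookkeeping. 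The only differences are cosmetic (descending $m\to m-1$ rather than ascending $m\to m+1$, and stating the Fibonacci sum as a general identity in $j$), and your explicit attention to the nonvanishing leading coefficient is a point the paper passes over more quickly.
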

\begin{proof}
We seek to prove that $\forall m\geqslant1$,$$\forall n\in\No,\;T^{[m]}_{n}(-1,-1)=F_{n+2m-1}-2^p\left(Q^{[m]}\left(p\right)+(-1)^nR^{[m]}\left(p\right)\right),$$where $p=\left\lfloor\frac{n+1}{2}\right\rfloor$, $Q^{[m]}$ and $R^{[m]}$ are polynomials with coefficients in $\mathbb Q$ and of degree $(m-2)^+$ and $(m-3)^+$ respectively.

We proceed by induction on $m$. For $m=1$, we have~\cite[Id.\ 4]{benjaminquinn2003}
$$\forall n\in\No,\;T^{[1]}_{n}(-1,-1)=\displaystyle\sum_{k=0}^{\left\lfloor n/2\right\rfloor}\binom{n-k}{k}=F_{n+1},$$
which is in line with the above formulation with $Q^{[1]}=R^{[1]}=0$.

Similarly, we already have the result for $m=2$ with $Q^{[2]}=1$ and $R^{[2]}=0$ from Theorem~\ref{resT2}:
$$\forall n\in\No,\;T^{[2]}_{n}(-1,-1)=F_{n+3}-2^{\lfloor (n+1)/2\rfloor}.$$

Assume that the result is true for a given $m\geqslant2$, i.e., there exist two polynomials  $Q^{[m]}$ and $R^{[m]}$ with coefficients in $\mathbb Q$ and respectively of degree $(m-2)^+$ and $(m-3)^+$, such that $\forall n\in\No,\;T^{[m]}_{n}(-1,-1)=F_{n+2m-1}-2^p\left(Q^{[m]}\left(p\right)+(-1)^nR^{[m]}\left(p\right)\right)$, where $p=\left\lfloor\frac{n+1}{2}\right\rfloor$. We will now prove the result for $m+1$.

First, from Relations~(\ref{relrecTmodd}) and~(\ref{relrecTmeven}) at order $m+1$, we have $$\forall s\geqslant1,\;T^{[m+1]}_{2s+1}(-1,-1)=2T^{[m+1]}_{2s-1}(-1,-1)+T^{[m]}_{2s}(-1,-1).$$
Using Lemma~\ref{lemma} applied to $u_s=T^{[m+1]}_{2s+1}(-1,-1)$ (i.e., $v_s=T^{[m]}_{2s}(-1,-1)$), we obtain $$\forall s\geqslant1,\;T^{[m+1]}_{2s+1}(-1,-1)=2^s\underbrace{T^{[m+1]}_{1}(-1,-1)}_1+\displaystyle\sum_{k=1}^s2^{s-k}T^{[m]}_{2k}(-1,-1).$$
Since $T^{[m]}_1(-1,-1)=1$, we may write that $$\forall s\geqslant1,\;T^{[m+1]}_{2s-1}(-1,-1)=2^{s-1}+\displaystyle\sum_{k=1}^{s-1}2^{s-1-k}T^{[m]}_{2k}(-1,-1).$$

Recall that $\forall k\in\No$, $T^{[m]}_{2k}(-1,-1)=F_{2k+2m-1}-2^k\left(Q^{[m]}(k)+R^{[m]}(k)\right)$. Lemma~\ref{lemma} applied to $u_s=F_{2s+2m+2}$ (i.e., $v_s=F_{2s+2m-1}$) yields $\displaystyle\sum_{k=1}^{s}2^{s-k}F_{2k+2m-1}=F_{2s+2m+2}-2^sF_{2m+2}$, hence, $$\sum_{k=1}^{s-1}2^{s-1-k}F_{2k+2m-1}=\frac12\left(F_{2s+2m+2}-2^sF_{2m+2}-F_{2s+2m-1}\right)=F_{2s+2m}-2^{s-1}F_{2m+2}.$$

Thus, we have
\begin{eqnarray}
\forall s\geqslant1,\;T^{[m+1]}_{2s-1}(-1,-1)&=&2^{s-1}+\displaystyle\sum_{k=1}^{s-1}2^{s-1-k}\left(F_{2k+2m-1}-2^k\left(Q^{[m]}(k)+R^{[m]}(k)\right)\right)\nonumber\\
&=&2^{s-1}+F_{2s+2m}-2^{s-1}F_{2m+2}-2^{s-1}\displaystyle\sum_{k=1}^{s-1}\left(Q^{[m]}(k)+R^{[m]}(k)\right),\nonumber\\
T^{[m+1]}_{2s-1}(-1,-1)&=&F_{(2s-1)+2(m+1)-1}-2^{s-1}\left(F_{2m+2}-1+A(s)\right),
\label{relTmodd}
\end{eqnarray}
where $A(X)=\displaystyle\sum_{k=1}^{X-1}\left(Q^{[m]}(k)+R^{[m]}(k)\right)$. For any $\alpha\in\No$, the polynomial $\displaystyle\sum_{k=1}^{X-1}k^\alpha$ is of degree $\alpha+1$ in $X$. Therefore, since $Q^{[m]}+R^{[m]}$ is of degree $(m-2)^+$ and $m\geqslant2$, $A$ is of degree $(m-1)^+$.

Relation~(\ref{relTmodd}) gives the expression of $T^{[m+1]}_{n}(-1,-1)$ for all odd $n$. We may thus retrieve the even subsequence from Relation~(\ref{relrecTmeven}) to obtain, for $s\geqslant1$,
\begin{eqnarray}
T^{[m+1]}_{2s}(-1,-1)&=&T^{[m+1]}_{2s-1}(-1,-1)+T^{[m]}_{2s}(-1,-1)\nonumber\\
&=&F_{(2s-1)+2(m+1)-1}-2^{s-1}\left(F_{2m+2}-1+A(s)\right)\nonumber\\&&+F_{2s+2m-1}-2^s\left(Q^{[m]}(s)+R^{[m]}(s)\right),\nonumber\\
T^{[m+1]}_{2s}(-1,-1)&=&F_{2s+2(m+1)-1}-2^{s-1}\left(F_{2m+2}-1+A(s)+2Q^{[m]}(s)+2R^{[m]}(s)\right).
\label{relTmeven}
\end{eqnarray}

We introduce the polynomials $Q^{[m+1]},R^{[m+1]}$ as follows: $$Q^{[m+1]}=\frac12\left(A+F_{2m+2}-1+Q^{[m]}+R^{[m]}\right)\text{ and }R^{[m+1]}=\frac12\left(Q^{[m]}+R^{[m]}\right).$$
It is clear that $Q^{[m+1]},R^{[m+1]}\in\mathbb Q[X]$. In addition, $Q^{[m+1]}$ is of the same degree as $A$, i.e., $(m-1)^+$ and $R^{[m+1]}$ is of the same degree as $Q^{[m]}$, i.e., $(m-2)^+$.

Therefore, we can synthesize Relations~(\ref{relTmodd}) and~(\ref{relTmeven}) as follows:
$$T^{[m+1]}_n=F_{n+2(m+1)-1}-2^p\left(Q^{[m+1]}(p)+(-1)^nR^{[m+1]}(p)\right),$$
where $p=\left\lfloor\frac{n+1}{2}\right\rfloor$, $Q^{[m+1]}$ and $R^{[m+1]}$ are respectively $((m+1)-2)^+$th and $((m+1)-3)^+$th degree polynomials with coefficients in $\mathbb Q$. This concludes the proof by induction.
\end{proof}

\subsection{Towards additional formulae}

The present work has reviewed Bernoulli's second and third-order triangles and found multiple connections to the Fibonacci sequence, expressed in Theorems~\ref{theoremS2} and~\ref{resT2} to~\ref{resT3}. Theorem~\ref{theoremTm} generalizes Theorems~\ref{resT2} to~\ref{resT3} to higher orders. Furthermore, Theorem~\ref{theoremlambda} uncovers a relation between partial sums of binomial coefficients in Bernoulli's triangle and sequences satisfying an additive recurrence relation that generalizes the Fibonacci sequence.

The methodology followed in this article (i.e., the study of paths over Bernoulli's triangles) reveals sequences that are formed by the partial sums of binomial coefficients. More generally, these paths could yield further fruit by mapping various functions $f$ over the elements of Bernoulli's triangles in order to study the corresponding sequences $\displaystyle\sum_{k}^{}f(B^{[m]}_{n_0+kl,k_0-kc})$.

\section{Acknowledgments}
We thank Olivier Bordell\`{e}s and Harry Robertson for careful proofreading. We are grateful to the anonymous referee for numerous suggestions. Denis Neiter would also like to thank Na Wang and Richard Andr\'{e}-Jeannin for their support during the writing of this article.

\bigskip
\hrule
\bigskip

\noindent 2010 {\it Mathematics Subject Classification}:
Primary 11B39; Secondary 05A19.

\noindent \textit{Keywords: }
Fibonacci number, binomial coefficient, Pascal's triangle, Bernoulli's triangle.

\bigskip
\hrule
\bigskip

\noindent (Concerned with OEIS sequences
\seqnum{A000045},
\seqnum{A000930},
\seqnum{A003269},
\seqnum{A003520},
\seqnum{A005251},
\seqnum{A005314},
\seqnum{A008949},
\seqnum{A027934},
\seqnum{A099568},
\seqnum{A138653}, and
\seqnum{A193605}.)

\end{document}